\documentclass[10pt]{article}
\usepackage[T1]{fontenc}
\usepackage[cp1250]{inputenc}
\usepackage{lmodern}

\usepackage[english]{babel}
\usepackage{latexsym}
\usepackage{amsmath}
\usepackage{indentfirst}
\usepackage{amsthm}
\usepackage{amssymb}
\usepackage{amsfonts}
\usepackage{stackrel}
\usepackage{dsfont}
\usepackage{tikz}
\usepackage{slashbox}
\usepackage[mathscr]{eucal}
\usepackage{authblk}
\usepackage{hyperref}
\usepackage{mathabx}
\usepackage{bookmark}

\newtheorem{thm}{Theorem}

\newcommand{\reals}{\mathbb{R}}
\newcommand{\naturals}{\mathbb{N}}
\newcommand{\integers}{\mathbb{Z}}
\newcommand{\complex}{\mathbb{C}}
\newcommand{\eps}{\varepsilon}

\newcommand{\Fourier}{\mathcal{F}}
\newcommand{\Hankel}{\mathcal{H}}
\newcommand{\Hilbert}{\mathcal{H}}

\textwidth = 460pt
\oddsidemargin = 15pt
\voffset = -20pt
\hoffset = -15pt
\marginparwidth = -10pt
\marginparsep = 0pt

\begin{document}
\title{Characterizing the Fourier transform by its properties}
\author{Mateusz Krukowski}
\affil{Institute of Mathematics, \L\'od\'z University of Technology, \\ W\'ol\-cza\'n\-ska 215, \
90-924 \ \L\'od\'z, \ Poland \\ \vspace{0.3cm} e-mail: mateusz.krukowski@p.lodz.pl}
\maketitle

\begin{abstract}
It is common knowledge that the Fourier transform enjoys the convolution property, i.e., it turns convolution in the time domain into multiplication in the frequency domain. It is probably less known that this property characterizes the Fourier transform amongst all linear and bounded operators $T:L^1 \longrightarrow C^b.$ Thus, a natural question arises: are there other features characterizing Fourier transform besides the convolution property? We answer this query in the affirmative by investigating the time differentiation property and its discrete counterpart, used to characterize discrete-time Fourier transform. Next, we move on
to locally compact abelian groups, where differentiation becomes meaningless, but the Fourier transform can be characterized via time shifts. The penultimate section of the paper returns to the convolution characterization, this time in the context of compact (not necessarily abelian) groups. We demonstrate that the proof existing in the literature can be greatly simplified. Lastly, we hint at the possibility of other transforms being characterized by their properties and demonstrate that the Hankel transform may be characterized by a Bessel-type differential property.
\end{abstract}

\smallskip
\noindent 
\textbf{Keywords : } Fourier transform, convolution property, representation theory on compact groups, Hankel transform
\vspace{0.2cm}
\\
\textbf{AMS Mathematics Subject Classification (2020): } 42A38, 43A25, 43A30

\section{Introduction}
\label{Chapter:Introduction}

Fourier transform is one of the central topics in harmonic analysis. Its significance is boosted by numerous applications in fields such as the analysis of differential equations, nuclear magnetic resonance, infrared spectroscopy, signal/image processing or quantum mechanics. Amongst many attempts at explaining why the Fourier transform is so exceptional, there is a group of arguments which follow a similar pattern. They boil down to the reasoning that the Fourier transform is the only operator satisfying a specified set of properties. One may regard this approach as ``axiomatic'' in the sense that we do not start with a complete formula for the Fourier transform, but rather demand certain properties of an operator and derive the formula as a necessary conclusion.

To give an example of such an approach, in his paper ``A characterization of Fourier transforms'' (see \cite{Jaming}) Jamming proved that the convolution property 
$$\forall_{f,g\in L^1}\ \Fourier(f\star g) = \Fourier(f) \Fourier(g)$$

\noindent
essentially characterizes the Fourier transform on real numbers $\reals$, the circle group $S^1$, the integers $\integers$ and the finite cyclic group $\integers_n$. His article inspired Lavanya and Thangavelu to show that any continuous *-homomorphism of $L^1(\complex^d)$ (with twisted convolution as multiplication) into $B(L^2(\reals^d))$ is essentially a Weyl transform and deduce a similar characterization for the Fourier transform on the Heisenberg group (see \cite{LavanyaThangavelu} and \cite{LavanyaThangavelu2}). Furthermore, Kumar and Sivananthan went on to demonstrate that the convolution property characterizes the Fourier transform on all compact groups (see \cite{KumarSivananthan}). 

It is easy to recognize a common thread in the articles cited above -- they all focus on the convolution property as the key ingredient in characterizing the Fourier transform. It is thus quite natural to ponder the question: are there any other properties that distinguish the Fourier transform? Our goal is to answer this question with a resounding ``yes!''

The paper comprises of the introduction, followed by four thematic sections and a bibliography. Section \ref{Section:Fourieronreals} provides a brief summary of the classical Fourier transform, with an emphasis on its time differentiation property. We extend this property to a larger class of integral transforms and introduce the Dirac delta property, which is a kind of ``boundary condition''. Theorem \ref{characterizationofFouriertransform} is the climactic point of the section, where we demonstrate that the time differentiation property (coupled with Dirac delta property) characterizes the classical Fourier transform on $\reals$. We go on to prove that a similar technique (suitably adjusted to $\integers$ instead of $\reals$) can be used to characterize the discrete-time Fourier transform.

Section \ref{Section:LCA} is preoccupied with the Fourier transform on an arbitrary locally compact abelian group. Needless to say, it makes little sense to talk of differentiation in this context so we study the time shift property instead. We demonstrate that this condition characterizes the Fourier transform with the help of (suitably reformulated) Dirac delta property (see Theorem \ref{Fouriershiftcharacterization}).

Section \ref{Section:compactgroups} commences with a recap of representation theory on compact groups. We work towards describing a bijection between irreducible, unitary representations of $G$ and irreducible, unitary *-representations of $L^1(G)$. This correspondence becomes the fundamental tool in demonstrating that the convolution property characterizes the Fourier transform on compact groups. Arguably, our proof simplifies earlier works of Kumar and Sivananthan (see \cite{KumarSivananthan}).

Section \ref{Section:Hankel} is meant to stimulate further research regarding other integral transforms. We prove that the Fourier transform is not the only operator which is characterized by some differentiation property. After a brief summary of the Hankel transform, Theorem \ref{Hankelcharacterization} demonstrates that the Bessel's differentiation property (together with a suitable ``boundary condition'') characterizes the Hankel transform. The paper concludes with a bibliography.

\section{Fourier transform on reals and integers}
\label{Section:Fourieronreals}

It is common knowledge that the Fourier transform 
$$\Fourier(f) := \int_{\reals}\ f(x)e^{-2\pi ixy}\ dx$$

\noindent
satisfies the equality 
\begin{gather}
\Fourier(f')(y) = -2\pi iy \Fourier(f)(y)
\label{Fourierdifferentiation}
\end{gather}

\noindent 
for every $y\in\reals$ and sufficiently ``good'' function $f$. The property is so elemental that it permeates to the realm of quantum mechanics, where it states that the Fourier transform of the momentum operator 
$$\hat{p} := -i\hbar \frac{\partial}{\partial x}$$ 

\noindent
is (up to a constant) the position operator 
$$\hat{x}\psi(x,t) := x\psi(x,t).$$ 

\noindent
Our aim in the current section is to argue that the time differentiation property \eqref{Fourierdifferentiation} is so fundamental that it characterizes the Fourier transform (given an appropriate ``boundary condition''). 

To begin with, let
\begin{itemize}
	\item $C^b(\reals)$ be the Banach space of complex-valued, continuous and bounded functions on $\reals,$
	\item $C_c^1(\reals)$ be the vector space of complex-valued, compactly-supported and $C^1-$functions on $\reals,$
	\item $L^1(\reals)$ be the Banach space of complex-valued, integrable functions on $\reals$.
\end{itemize}

\noindent
We focus on integral transforms $T: L^1(\reals)\longrightarrow C^b(\reals)$, i.e., 
$$T(f)(y) := \int_{\reals}\ K(x,y)f(x)\ dx,$$

\noindent
where $K\in C^b(\reals\times\reals)$ is differentiable with respect to the first variable. Amongst all these operators we distinguish those that satisfy the time differentiation property, i.e.,
\begin{gather}
T(f')(y) = -2\pi iy T(f)(y)
\label{differentiationproperty}
\end{gather}

\noindent
for every $f\in C_c^1(\reals)$ and $y\in\reals$. The condition is plainly molded in the image of \eqref{Fourierdifferentiation}, although we cannot expect it to characterize the Fourier transform alone. This is because the zero map is a perfect example of an integral transform satisfying \eqref{differentiationproperty}, which is not the Fourier transform. Thus, we need some sort of ``boundary condition'' accompanying the time differentiation property. 

In order to introduce such a condition, we say that $(\delta_n) \subset C_c^1(\reals)$ is a Dirac delta sequence if every $\delta_n$
\begin{itemize}
	\item has compact support contained in $\left[-\frac{1}{n},\frac{1}{n}\right],$
	\item is nonnegative and symmetric, i.e. $\delta_n(x) = \delta_n(-x)$ for every $x \in\reals,$
	\item satisfies the equality
	$$\int_{\reals}\ \delta_n(x)\ dx = 1.$$
\end{itemize}

\noindent
An example of a Dirac delta sequence is 
\begin{gather*}
\delta_n(x) := 
\left\{\begin{array}{cc}
C n e^{-\frac{1}{1-(nx)^2}} & \text{ if } x\in \left(-\frac{1}{n},\frac{1}{n}\right),\\
0 & \text{ otherwise,}
\end{array}\right.
\end{gather*}

\noindent
where
$$C := \left(\int_{-1}^1\ e^{-\frac{1}{1-x^2}}\ dx\right)^{-1}.$$ 

\noindent
The choice of constant $C$ is such that 
$$\int_{\reals}\ \delta_n(x)\ dx = 1$$

\noindent
for every $n\in\naturals.$ 

We are ready to introduce the ``boundary condition'' accompanying the time differentiation property. We say that $T: L^1(\reals) \longrightarrow C^b(\reals)$ satisfies the Dirac delta property if there exists a Dirac delta sequence $(\delta_n) \subset C_c^1(\reals)$ such that for every $y\in\reals$ we have 
\begin{gather}
\lim_{n \rightarrow \infty}\ T(\delta_n)(y) = 1.
\label{Diracdeltapropertyonreals}
\end{gather}

\noindent 
Intuitively, the property means that $T$ maps (certain) functions ``compressed in time'' to functions ``spread in frequency''. It is not surprising that the Fourier transform has the Dirac delta property (which is a good indicator that we have not strayed from the set path). Indeed, for every $y\in\reals$ and $n\in\naturals$ we have 
\begin{equation*}
\begin{split}
\Fourier(\delta_n)(y) &= \int_{\reals}\ \delta_n(x)e^{-2\pi ixy}\ dx = \int_{(-\infty,0)}\ \delta_n(x)e^{-2\pi ixy}\ dx + \int_{(0,\infty)}\ \delta_n(x)e^{-2\pi ixy}\ dx\\
&=\int_{(0,\infty)}\ \delta_n(-x)e^{2\pi ixy}\ dx + \int_{(0,\infty)}\ \delta_n(x)e^{-2\pi ixy}\ dx = 2\int_{(0,\infty)}\ \delta_n(x)\cos(2\pi xy)\ dx.
\end{split}
\end{equation*}

\noindent
due to the fact that every $\delta_n$ is symmetric. Since 
$$1\geqslant \cos(2\pi xy) \geqslant \cos\left(2\pi\cdot \frac{y}{n}\right)$$

\noindent
for $x \in \left[0,\frac{1}{n}\right)$ and $n$ large enough then
$$1 = 2\int_{(0,\infty)}\ \delta_n(x)\ dx \geqslant \Fourier(\delta_n)(y) \geqslant 2\cos\left(2\pi\cdot \frac{y}{n}\right)\int_{(0,\infty)}\ \delta_n(x)\ dx = \cos\left(2\pi\cdot \frac{y}{n}\right).$$

\noindent
Taking the limit $n\longrightarrow\infty$ we conclude that $\Fourier(\delta_n)(y)\longrightarrow 1$ for every $y\in\reals.$

\begin{thm}
Let $T : L^1(\reals) \longrightarrow C^b(\reals)$ be an integral transform with kernel $K \in C^b(\reals\times\reals),$ which is differentiable with respect to the first variable. If $T$ satisfies 
\begin{itemize}
	\item the time differentiation property \eqref{differentiationproperty}, and
	\item the Dirac delta property \eqref{Diracdeltapropertyonreals}
\end{itemize}

\noindent
then $T$ is the Fourier transform.
\label{characterizationofFouriertransform}
\end{thm}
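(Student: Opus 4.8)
The plan is to translate the two hypotheses into pointwise information about the kernel $K$ and to show that together they force $K(x,y) = e^{-2\pi i xy}$, which is precisely the assertion that $T = \Fourier$.

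First I would exploit the time differentiation property. Fix $y \in \reals$ and an arbitrary test function $f \in C_c^1(\reals)$. Because $f$ is compactly supported and $K(\cdot,y)$ is differentiable in the first variable, the map $x \mapsto K(x,y)f(x)$ is compactly supported, so the fundamental theorem of calculus yields the integration-by-parts identity $\int_\reals K(x,y) f'(x)\, dx = -\int_\reals \partial_x K(x,y) f(x)\, dx$, the boundary terms vanishing by compact support. The left-hand side is $T(f')(y)$, which \eqref{differentiationproperty} expresses through $T(f)(y) = \int_\reals K(x,y) f(x)\,dx$. Comparing the two expressions for $T(f')(y)$ gives
\begin{gather*}
\int_\reals \big(\partial_x K(x,y) + 2\pi iy\, K(x,y)\big) f(x)\, dx = 0 \qquad \text{for every } f \in C_c^1(\reals).
\end{gather*}
By the fundamental lemma of the calculus of variations — applicable since $K(\cdot,y)$ and $\partial_x K(\cdot,y)$ are continuous, so the bracketed factor is a continuous function of $x$ — that factor must vanish identically, leaving the first-order linear ODE $\partial_x K(x,y) = -2\pi iy\, K(x,y)$. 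For each fixed $y$ this integrates to $K(x,y) = C(y) e^{-2\pi i xy}$, where the constant of integration $C(y) = K(0,y)$ may a priori depend on the frequency variable $y$.

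It remains to pin down $C(y)$, and this is where the Dirac delta property \eqref{Diracdeltapropertyonreals} enters. Fixing a Dirac delta sequence $(\delta_n)$ witnessing the property, I compute $T(\delta_n)(y) = C(y)\int_\reals e^{-2\pi i xy}\delta_n(x)\, dx$; since each $\delta_n$ is even, the imaginary part drops out and this equals $C(y)\int_\reals \cos(2\pi xy)\delta_n(x)\, dx$. The squeeze estimate already carried out for $\Fourier$ in the discussion preceding the theorem shows $\int_\reals \cos(2\pi xy)\delta_n(x)\,dx \to 1$, so $T(\delta_n)(y) \to C(y)$. Comparing with \eqref{Diracdeltapropertyonreals} forces $C(y) = 1$ for every $y$, whence $K(x,y) = e^{-2\pi i xy}$ and $T = \Fourier$.

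The integration by parts and the approximate-identity limit are routine. The step demanding the most care is the passage from the integrated identity to the pointwise ODE: one must guarantee enough regularity of $\partial_x K$ for the fundamental lemma to deliver an everywhere-valid (not merely almost-everywhere) identity, and then keep track of the fact that the constant of integration is a function of $y$ rather than a genuine constant. If $\partial_x K(\cdot,y)$ is only assumed to exist pointwise, I would first observe that the integrated identity forces it to agree almost everywhere with the continuous function $-2\pi iy\, K(\cdot,y)$, so that $K(\cdot,y)$ is an antiderivative of a continuous function and the ODE indeed holds in the classical sense.
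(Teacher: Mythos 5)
Your proposal is correct and follows essentially the same route as the paper: integration by parts plus the time differentiation property yields $\int_\reals \left(\partial_x K(x,y) + 2\pi iy\, K(x,y)\right) f(x)\, dx = 0$ for all $f \in C_c^1(\reals)$, the fundamental lemma of the calculus of variations gives the ODE whose solution is $K(x,y) = g(y)e^{-2\pi ixy}$, and the Dirac delta property (via the squeeze estimate established before the theorem) forces $g \equiv 1$. The only cosmetic difference is that you conclude $T = \Fourier$ on all of $L^1(\reals)$ directly from the identification of the kernel, whereas the paper phrases the last step as a density-plus-boundedness argument from $C_c^1(\reals)$ to $L^1(\reals)$; both are valid, and your added remark on the regularity needed for the fundamental lemma is a careful touch the paper glosses over.
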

\begin{proof}
For every $f\in C_c^1(\reals)$ and $y\in\reals$ we have (due to integration by parts) the equality
\begin{gather}
T(f')(y) = \int_{\reals}\ K(x,y)f'(x)\ dx = -\int_{\reals}\ \frac{\partial}{\partial x}K(x,y)f(x)\ dx.
\label{integrationbyparts}
\end{gather}

\noindent
Using the time differentiation property we write
$$2\pi iy \int_{\reals}\ K(x,y)f(x)\ dx = -\int_{\reals}\ \frac{\partial}{\partial x}K(x,y)f(x)\ dx$$

\noindent
or, equivalently 
$$0 = \int_{\reals}\ \left(\frac{\partial}{\partial x}K(x,y) + 2\pi iy K(x,y)\right) f(x)\ dx$$

\noindent
for every $f\in C_c^1(\reals)$ and $y\in \reals.$ By the fundamental theorem of calculus of variations, we conclude that 
$$0 = \frac{\partial}{\partial x}K(x,y) + 2\pi iy K(x,y) = \frac{\partial}{\partial x}\left(e^{2\pi iyx}K(x,y)\right)$$

\noindent
for every $x,y\in\reals.$ This implies that 
$$K(x,y) = g(y)e^{-2\pi iyx}$$

\noindent
where $g\in C^b(\reals).$

At this stage we know that 
$$T(f)(y) = g(y)\Fourier(f)(y)$$

\noindent
for every $f\in C_c^1(\reals)$ and $y\in\reals.$ We fix $y\in\reals$ and pick any Dirac delta sequence $(\delta_n)$ for the operator $T$. We have
$$1 = \lim_{n\rightarrow\infty}\ T(\delta_n)(y) = \lim_{n\rightarrow\infty}\ g(y)\Fourier(\delta_n)(y) = g(y),$$

\noindent
which proves that $T = \Fourier$ on $C^1_c(\reals).$ However, since $C^1_c(\reals)$ is dense in $L^1(\reals)$, the two linear and bounded transforms ($T$ and $\Fourier$) must coincide on the whole space $L^1(\reals).$
\end{proof}

Motivated by the theorem above, we try to adapt the proof to characterize the Fourier transform on $\integers,$ i.e., the discrete-time Fourier transform $\Fourier : \ell^1(\integers) \longrightarrow C(S^1)$ given by
$$\Fourier(f)(y) := \sum_{n\in\integers}\ f(n)e^{-2\pi iny}.$$

\noindent
$S^1$ usually stands for the unit circle group, but in the context of the discrete-time Fourier transform, it is convenient (or customary) to understand it as an interval $[0,1]$ with addition $\bmod\ 1.$ 

Functions (sequences) in $\ell^1(\integers)$ are not differentiable per se, but we may imitate differentiability with forward and backward difference operators: 
\begin{equation*}
\begin{split}
\Delta^+f(n) &:= f(n+1) - f(n),\\
\Delta^-f(n) &:= f(n) - f(n-1).
\end{split}
\end{equation*}

\noindent
This allows for the following characterization:

\begin{thm}
If a linear and bounded operator $T : \ell^1(\integers) \longrightarrow C(S^1)$ satisfies 
\begin{itemize}
	\item the time difference property, i.e., for every $f\in \ell^1(\integers)$ and $y \in S^1$ we have
	\begin{gather}
	T(\Delta^+f)(y) = (e^{2\pi iy} - 1)T(f)(y),
	\label{differenceproperty}
	\end{gather}
	
	\item $T(\mathds{1}_0)(y) = 1$ for every $y\in S^1$ ($\mathds{1}_0$ stands for an indicator function/sequence of the singleton $\{0\}$),
\end{itemize}

\noindent
then it is the discrete-time Fourier transform.
\label{characterizationofFouriertransformZ}
\end{thm}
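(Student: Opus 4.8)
The plan is to mirror the structure of the proof of Theorem \ref{characterizationofFouriertransform}, with the difference property playing the role that the differentiation property played there, and with a discrete difference equation replacing the kernel-determining ODE. The natural analogues of the compactly supported test functions are the indicator sequences $\mathds{1}_k$ ($k \in \integers$), whose finite linear combinations form a dense subspace of $\ell^1(\integers)$. Since both $T$ and $\Fourier$ are linear and bounded, it suffices to show that they agree on every $\mathds{1}_k$; the passage to all of $\ell^1(\integers)$ then follows by continuity.

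First I would compute the forward difference of an indicator sequence. A direct check gives $\Delta^+\mathds{1}_k = \mathds{1}_{k-1} - \mathds{1}_k$, since $\mathds{1}_k(n+1) - \mathds{1}_k(n)$ equals $1$ at $n = k-1$, equals $-1$ at $n = k$, and vanishes otherwise. Feeding this into the time difference property \eqref{differenceproperty} and using linearity of $T$ yields
\begin{gather*}
T(\mathds{1}_{k-1})(y) - T(\mathds{1}_k)(y) = (e^{2\pi iy} - 1)\, T(\mathds{1}_k)(y),
\end{gather*}
which rearranges to the recurrence $T(\mathds{1}_{k-1})(y) = e^{2\pi iy}\, T(\mathds{1}_k)(y)$, valid for every $k \in \integers$ and $y \in S^1$. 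This is the discrete counterpart of the relation $\partial_x(e^{2\pi iyx}K(x,y)) = 0$ that pinned down the kernel on $\reals$.

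Anchoring this recurrence with the boundary condition $T(\mathds{1}_0)(y) = 1$ and iterating it in both directions, a two-sided induction should give $T(\mathds{1}_k)(y) = e^{-2\pi iky}$ for every $k \in \integers$ and $y \in S^1$, which is precisely $\Fourier(\mathds{1}_k)(y)$. By linearity $T$ and $\Fourier$ then coincide on all finitely supported sequences, and by the density of these sequences in $\ell^1(\integers)$ together with the boundedness of both operators they coincide on the whole space. I expect the only thing requiring care to be the bookkeeping of signs and index shifts in $\Delta^+\mathds{1}_k$ and in the two directions of the induction (for $k \geqslant 0$ and $k \leqslant 0$); the concluding density-and-boundedness step is routine and identical in spirit to the final step on $\reals$.
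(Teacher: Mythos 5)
Your proposal is correct, but it takes a genuinely different and more elementary route than the paper. The paper first invokes $L^1$--$L^{\infty}$ duality (Theorem 1.3 in the Arendt--Bukhvalov reference) to represent $T$ by a kernel $K \in L^{\infty}(\integers\times S^1)$, then performs a summation by parts to convert the time difference property \eqref{differenceproperty} into the kernel recurrence $e^{2\pi iy}K(n,y) = K(n-1,y)$, and finally iterates from $K(0,y) = 1$. You bypass both the duality theorem and the summation by parts: since the indicator sequences $\mathds{1}_k$ actually lie in $\ell^1(\integers)$ (unlike point masses in the $L^1(\reals)$ setting of Theorem \ref{characterizationofFouriertransform}), you can feed them directly into \eqref{differenceproperty}, and the identity $\Delta^+\mathds{1}_k = \mathds{1}_{k-1} - \mathds{1}_k$ plus linearity immediately gives the recurrence $T(\mathds{1}_{k-1})(y) = e^{2\pi iy}T(\mathds{1}_k)(y)$ --- which is exactly the paper's relation \eqref{iteratekappa} with $K(n,y) = T(\mathds{1}_n)(y)$, obtained without ever postulating a kernel. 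The two-sided induction and the concluding density-plus-boundedness argument are routine and sound. What the paper's approach buys is structural parallelism with the continuous case (an integral transform determined by its kernel), which is the theme of the section; what your approach buys is self-containment and economy --- it needs nothing beyond the fact that finitely supported sequences are dense in $\ell^1(\integers)$, and it avoids the slightly delicate boundary-term bookkeeping in the paper's summation by parts. Note also that your argument only uses \eqref{differenceproperty} on finitely supported sequences, so it proves a formally stronger statement.
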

\begin{proof}
Since both $\integers$ and $S^1$ are $\sigma-$finite spaces, we use $L^1 - L^{\infty}$ duality to establish the existence of a kernel $K \in L^{\infty}(\integers\times S^1)$ such that 
$$T(f)(y) = \sum_{n\in\integers}\ K(n,y)f(n)$$

\noindent
for every $y\in S^1$ (see Theorem 1.3 in \cite{ArendtBukhvalov}). Next, for every $f\in \ell^1(\integers)$ and $y\in S^1$ we use summation by parts to get
\begin{equation*}
\begin{split}
T(\Delta^+f)(y) &= \lim_{n\rightarrow \infty}\ \sum_{n=-N}^N\ K(n,y)(f(n+1) - f(n)) \\
&= \lim_{N\rightarrow \infty}\ \bigg(K(N,y)f(N+1) - K(-N,y)f(-N)\bigg) \\
&- \lim_{N\rightarrow \infty}\ \sum_{n=-N}^{N-1}\ f(n+1) (K(n+1,y) - K(n,y)) \\
&= - \sum_{n\in\integers}\ f(n+1)\Delta^+K(n,y) = - \sum_{n\in\integers}\ f(n)\Delta^-K(n,y).
\end{split}
\end{equation*}

\noindent
By \eqref{differenceproperty} we have
$$(e^{2\pi iy} - 1)T(f)(y) = - \sum_{n\in\integers}\ f(n)\Delta^-K(n,y)$$

\noindent
for every $f\in \ell^1(\integers)$ and $y\in S^1$. A simple rearrangement of terms yields
$$\sum_{n\in\integers} \bigg(e^{2\pi iy}K(n,y) - K(n-1,y)\bigg)f(n) = 0,$$

\noindent
which implies 
\begin{gather}
e^{2\pi iy}K(n,y) = K(n-1,y)
\label{iteratekappa}
\end{gather}

\noindent
for every $n\in\integers$ and $y\in S^1.$ Finally, since 
$$K(0,y) = T(\mathds{1}_0)(y) = 1$$

\noindent
we may iterate \eqref{iteratekappa} to obtain $K(n,y) = e^{-2\pi iny}.$ This concludes the proof.
\end{proof}

\section{Shifts on locally compact abelian groups}
\label{Section:LCA}

Previous section focused on characterizing the Fourier transform on $\reals$ and $\integers.$ In the current section we establish a ``broader'' characterization which applies to all locally compact abelian groups, of which $\reals$ and $\integers$ are just particular examples. Naturally, we need to shift our perspective and focus on other properties of the Fourier transform, since differential or difference operators are meaningless for LCA groups in general. Thus, we bring our attention to the time shift property, which says that a time shift of signal $f$ by $x$ corresponds to frequency spectrum being modified by a linear phase shift $e^{-2\pi ixy}.$ More formally:
$$\Fourier(L_xf)(y) = e^{-2\pi ixy} \Fourier(f)(y)$$

\noindent
where $L_x: L^1(\reals)\longrightarrow L^1(\reals)$ is given by $L_xf(t) := f(t-x).$ Our goal is to prove that a generalized version of this property characterizes the Fourier transform on any LCA group.

Let $G$ be a locally compact abelian group with dual group $\widehat{G}.$ We denote the set of all open, nonempty neighbourhoods of the neutral element of $G$ by $N_0$. Next, we say that a family $(\delta_U)_{U\in N_0}$ is called a Dirac delta family, if every $\delta_U$
\begin{itemize}
	\item is continuous and compactly supported, i.e., $\delta_U \in C_c(G),$
	\item is nonnegative and symmetric, i.e., $\delta_U(x) = \delta_U(-x)$ for every $x\in G,$
	\item satisfies the equality 
	$$\int_G\ \delta_U(x)\ dx = 1.$$
\end{itemize}
 
\noindent
We say that $T:L^1(G) \longrightarrow C^b(\widehat{G})$ satisfies Dirac delta property if there exists a Dirac delta family $(\delta_U)$ such that for every $\chi\in\widehat{G}$ we have 
\begin{gather}
\lim_{U \rightarrow 1}\ T(\delta_U)(\chi) = 1,
\label{DiracdeltaLCA}
\end{gather}

\noindent
or, more explicitly, 
$$\forall_{\eps > 0}\ \exists_{U \in N_0}\ \forall_{\substack{V\subset U\\ V\in N_0}}\ |T(\delta_U)(\chi) - 1| < \eps.$$

\begin{thm}
If a linear and bounded operator $T:L^1(G) \longrightarrow C^b(\widehat{G})$ satisfies  
\begin{itemize}
	\item the time shift property, i.e., for every $f\in L^1(G)$ and $x\in G,\ \chi\in \widehat{G}$ we have 
	\begin{gather}
	T(L_xf)(\chi) = \overline{\chi(x)} T(f)(\chi),
	\label{timeshiftpropertyLCA}
	\end{gather}
	
	\item the Dirac delta property,
\end{itemize}

\noindent
then $T$ is the Fourier transform.
\label{Fouriershiftcharacterization}
\end{thm}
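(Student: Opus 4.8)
\noindent
The plan is to avoid any explicit kernel representation (which is awkward on a general LCA group) and instead exploit the observation that convolution against a member of the Dirac delta family is a continuous superposition of time shifts. Concretely, for $f \in L^1(G)$ and $U \in N_0$ I would write the convolution as a vector-valued (Bochner) integral in $L^1(G)$,
$$ f \star \delta_U = \int_G f(s)\, L_s\delta_U\, ds, $$
which is legitimate since the translation map $s \mapsto L_s\delta_U$ is continuous from $G$ into $L^1(G)$ and $\|f(s)\,L_s\delta_U\|_{L^1} = |f(s)|$ is integrable.

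The crux is then to push $T$ through this integral. As $\mathrm{ev}_\chi \circ T : L^1(G) \longrightarrow \complex$ (the evaluation of $T(\cdot)$ at a fixed $\chi \in \widehat{G}$) is a bounded linear functional, it commutes with the Bochner integral; invoking the time shift property \eqref{timeshiftpropertyLCA} in the form $T(L_s\delta_U)(\chi) = \overline{\chi(s)}\,T(\delta_U)(\chi)$ then yields
$$ T(f\star\delta_U)(\chi) = \int_G f(s)\, T(L_s\delta_U)(\chi)\, ds = \left(\int_G f(s)\,\overline{\chi(s)}\, ds\right) T(\delta_U)(\chi) = \Fourier(f)(\chi)\, T(\delta_U)(\chi), $$
where $\Fourier(f)(\chi) = \int_G f(x)\,\overline{\chi(x)}\, dx$ is the Fourier transform on $G$.

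Finally I would let $U \to 1$. On the one hand $(\delta_U)$ is an approximate identity (nonnegative, of unit mass, with supports shrinking to the neutral element), so $f\star\delta_U \to f$ in $L^1(G)$; since $T$ is bounded and evaluation at $\chi$ is continuous, the left-hand side tends to $T(f)(\chi)$. On the other hand the Dirac delta property \eqref{DiracdeltaLCA} forces $T(\delta_U)(\chi) \to 1$, so the right-hand side tends to $\Fourier(f)(\chi)$. Equating the two limits gives $T(f)(\chi) = \Fourier(f)(\chi)$ for all $f \in L^1(G)$ and $\chi \in \widehat{G}$, that is, $T = \Fourier$.

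\noindent
I expect the principal obstacle to be the rigorous justification of interchanging $T$ with the integral defining $f\star\delta_U$: one must check that $s \mapsto f(s)\,L_s\delta_U$ is genuinely Bochner integrable with integral $f\star\delta_U$ (continuity of translation in $L^1(G)$ being the key input) and that $\mathrm{ev}_\chi\circ T$ may legitimately be moved inside. The remaining analytic ingredient, that the Dirac delta family is an approximate identity so that $f\star\delta_U \to f$, is standard, but it quietly relies on each $\delta_U$ being supported in $U$, which is exactly the feature that makes the limit $U \to 1$ meaningful.
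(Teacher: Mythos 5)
Your proposal is correct and follows essentially the same route as the paper's own proof: the paper likewise writes $\delta_U \star f$ as the vector-valued integral $\int_G f(x)\,L_x\delta_U\,dx$ (citing Aliprantis--Border and Dinculeanu for this identity), pushes the bounded operator $T$ through it and applies the time shift property to obtain $T(\delta_U\star f)(\chi) = T(\delta_U)(\chi)\,\Fourier(f)(\chi)$, and then passes to the limit $U \to 1$ using $\|\delta_U\star f - f\|_1 \to 0$ (Folland, Prop.~2.42) together with the Dirac delta property. Your closing remark that the approximate-identity convergence relies on $\supp\,\delta_U \subset U$ is well taken, as the paper's stated definition of a Dirac delta family omits this support condition even though the limit $U \to 1$ implicitly requires it.
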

\begin{proof}
Let $(\delta_U)$ be a Dirac delta family for $T$. For every $f\in L^1(G),\ \chi\in\widehat{G}$ and $U \in N_0$ we use Lemma 11.45 in \cite{AliprantisBorder}, p. 427 (or Proposition 7 in \cite{Dinculeanu}, p. 123) to get
\begin{equation}
\begin{split}
T(\delta_U\star f)(\chi) &= T\left(\int_G\ f(x)L_x\delta_U\ dx\right)(\chi) = \int_G\ f(x)T(L_x\delta_U)(\chi)\ dx \\
&\stackrel{\eqref{timeshiftpropertyLCA}}{=} T(\delta_U)(\chi) \int_G\ f(x) \overline{\chi(x)}\ dx = T(\delta_U)(\chi) \Fourier(f)(\chi).
\label{mainFourierequality}
\end{split}
\end{equation}

On the other hand, $\|\delta_U\star f - f\|_1 \longrightarrow 0$ by Proposition 2.42 in \cite{Folland}, p. 53 so the continuity of $T$ implies 
$$\|T(\delta_U\star f) - T(f)\|_{\infty} \longrightarrow 0.$$ 

\noindent
Consequently, we have  
$$T(\delta_U\star f)(\chi) \longrightarrow T(f)(\chi)$$ 

\noindent
for every $f\in L^1(G)$ and $\chi\in\widehat{G}.$ Finally,
$$T(f)(\chi) = \lim_{U\rightarrow 0}\ T(\delta_U\star f)(\chi) \stackrel{\eqref{mainFourierequality}}{=} \lim_{U \rightarrow 0}\ T(\delta_U)(\chi) \Fourier(f)(\chi) \stackrel{\eqref{DiracdeltaLCA}}{=} \Fourier(f)(\chi),$$

\noindent 
which concludes the proof.
\end{proof}

\section{Fourier transform on compact groups}
\label{Section:compactgroups}

We have already established novel characterizations of the Fourier transform on $\reals, \integers$ and all locally compact abelian groups in general. The current section takes a step back and reviews the existing literature concerning the Fourier transform characterization on compact groups. With that goal in mind, let us briefly summarize the historical background.

In 2010, Jaming proved that the convolution property characterizes the Fourier transform on four canonical groups: $\reals,\ S^1,\ \integers,\ \integers_n$ (see \cite{Jaming}). Within next 4 years, Lavanya and Thangavelu proved that the Heisenberg group admits a similar characterization (see \cite{LavanyaThangavelu} and \cite{LavanyaThangavelu2}). Their approach was adapted by Kumar and Sivananthan, who investigated the convolution property of the Fourier transform on arbitrary compact group (see \cite{KumarSivananthan}). However, studying the proof of their main result, it feels like Kumar and Sivananthan take an unnecessary long detour in order to reach the final destination. It is even difficult to lay down a summary of their proof, which goes through the space of all coefficient functions and  establishes invariance of carefully crafted Hilbert subspaces in order to construct unitary operators relating the operator in question to the Fourier transform. An interested Reader is encouraged (at their own risk) to consult \cite{KumarSivananthan} for meticulous details. 

Our goal is to show that there exists a more direct path than the one chosen by Kumar and Sivananthan. The proof we lay down is rooted in representation theory and circumvents the technicalities of the earlier approach by using the correspondence between representations of a compact group and its $L^1-$space. With the clarity of our exposition in mind, we take the liberty of recalling the basic concepts of representation theory. 

Let $G$ be a compact group. A map $\pi : G \longrightarrow U(H_{\pi})$ is called a unitary representation of $G$ if
\begin{itemize}
	\item $H_{\pi}$ is a nonzero Hilbert space and $U(H_{\pi})$ is a space of unitary operators on $H_{\pi},$
	\item $\pi$ is a homomorphism, i.e., $\pi(xy) = \pi(x)\pi(y)$ and $\pi(x^{-1}) = \pi(x)^{-1} = \pi(x)^*$ for every $x,y\in G,$
	\item $\pi$ is continuous in the strong operator topology, i.e., the map $x \mapsto \pi(x)u$ if continuous for any $u \in H_{\pi}.$
\end{itemize}
 
\noindent
An instructive example of a representation is the left regular representation $\pi_L: G \longrightarrow U(L^2(G))$ defined by 
$$\pi_L(x)(f)(y) := L_xf(y) = f(x^{-1}y).$$

Two representations $\pi_1,\pi_2$ are said to be unitarily equivalent if there exists a unitary operator $Q: H_{\pi_1} \longrightarrow H_{\pi_2}$ such that 
$$\pi_2(x) = Q \pi_1(x) Q^{-1}$$

\noindent
for every $x\in G.$ For instance, the right regular representation $\pi_R: G \longrightarrow U(L^2(G))$ defined by 
$$\pi_R(x)(f)(y) := R_xf(y) = f(yx)$$ 

\noindent
is unitarily equivalent to $\pi_L$. Indeed, the unitary operator $Q : L^2(G) \longrightarrow L^2(G)$ given by $Q(f)(y) := f(y^{-1})$ satisfies
$$Q\pi_L(x)Q^{-1}f(y) = Q\pi_L(x)f(y^{-1}) = Qf(x^{-1}y^{-1}) = Qf((yx)^{-1}) = f(yx) = R_xf(y) = \pi_R(x)(f)(y)$$

\noindent
for every $x,y\in G.$ Given a representation $\pi$, the set of all unitarily equivalent representations is called the unitary equivalence class and formally denoted by $[\pi].$ However, such a symbol is rather cumbersome and we usually abuse the notation by writing $\pi$ instead of $[\pi].$

A closed subspace $S$ of $H_{\pi}$ is called invariant if $\pi(x)S\subset S$ for every $x\in G.$ Obviously, the trivial space $\{0\}$ as well as the whole space $H_{\pi}$ are invariant. If it happens that these are the only invariant subspaces of $\pi$, we say that the representation is irreducible. The set of all unitary equivalence classes of irreducible unitary representations of $G$ is denoted by $\widehat{G}$.

%

For our purposes we cannot restrict ourselves only to representations on compact groups, but rather extend the theory to $L^1-$spaces. In general, a map $\rho : A \longrightarrow B(H_{\rho})$ is called a *-representation of a Banach *-algebra $A$ if
\begin{itemize}
	\item $H_{\pi}$ is a nonzero Hilbert space and $B(H_{\pi})$ is a space of linear and bounded operators on $H_{\pi},$
	\item $\rho(a+b) = \rho(a) + \rho(b),$
	\item $\rho(\lambda a) = \lambda\rho(a),$
	\item $\rho(ab) = \rho(a)\rho(b),$
	\item $\rho(a^*) = \rho(a)^*.$
\end{itemize}

\noindent
for every $a,b\in A$ and $\lambda\in\complex.$ We do not need to assume that a *-representation is continuous, since this is always the case due to Proposition 1.3.7 in \cite{Dixmier}, p. 9. We say that a *-representation $\rho$ is nondegenerate (see Proposition 9.2 and Definition 9.3 in \cite{Takesaki}, p. 36) if
\begin{itemize}
	\item for every $u\in\Hilbert_{\rho}$ there exists $a\in A$ such that $\rho(a)u\neq 0$, or equivalently
	\item the vector space $\pi(L^1(G))H_{\pi}$ is dense in $H_{\pi}$.
\end{itemize} 

At this point we are ready to describe the correspondence between representations on compact groups and $L^1-$spaces. For every unitary representation $\pi$ of $G$ we define $\Pi: L^1(G) \longrightarrow B(H_{\pi})$ with the formula
\begin{gather}
\forall_{u,v\in H_{\pi}}\ \langle \Pi(f)u|v\rangle := \int_G\ f(x) \langle \pi(x)u|v\rangle\ dx,
\label{representationofL1}
\end{gather}

\noindent
where $\langle\cdot|\cdot\rangle$ is the inner product in the Hilbert space $H_{\pi}.$ By Theorem 3.9 in \cite{Folland}, p. 73 (or Proposition 6.2.1 in \cite{DeitmarEchterhoff}, p. 131) the map $\Pi$ is a nondegenerate *-representation of $L^1(G).$ Furthermore, by Theorem 3.11 in \cite{Folland}, p. 74 (or Proposition 6.2.3 in \cite{DeitmarEchterhoff}, p. 133) every *-representation is of the form \eqref{representationofL1}, i.e., the map $\pi \mapsto \Pi$ is a bijection between unitary representations of $G$ and nondegenerate *-representation of $L^1(G)$. Since a unitary representation $\pi$ is irreducible if and only if $\Pi$ is irreducible (see Theorem 3.12(b) in \cite{Folland}, p. 75) then the map $\pi \mapsto \Pi$ determines a bijection between $\widehat{G}$ and the set of irreducible *-representations of $L^1(G)$, denoted by $\widehat{L^1(G)}$ (see Remark 6.2.4 in \cite{DeitmarEchterhoff}, p. 134).  

Let
$$\ell^{\infty}-\oplus_{\pi \in \widehat{G}} B(H_{\pi}) := \bigg\{(A_{\pi}) \in \prod_{\pi\in\widehat{G}}\ B(H_{\pi}) \ :\ \sup_{\pi\in\widehat{G}}\ \sup_{\|u\| = 1}\ \|A_{\pi}u\|_{\pi} < \infty\bigg\}$$

\noindent
be a Banach *-algebra with the norm 
$$\|(A_{\pi})\|_{\infty} := \sup_{\pi\in\widehat{G}}\ \sup_{\|u\| = 1}\ \|A_{\pi}u\|_{\pi}$$

\noindent
and involution $(A_{\pi})^* = (A_{\pi}^*).$ The Fourier transform on compact group $G$ is the map $\Fourier: L^1(G) \longrightarrow \ell^{\infty}-\oplus_{\pi \in \widehat{G}} B(H_{\pi})$ given by 
\begin{gather*}
\forall_{u,v\in H_{\pi}}\ \langle \Fourier(f)(\pi)u|v\rangle := \int_G\ f(x) \langle\pi(x)^*u|v\rangle\ dx.
\end{gather*} 

\noindent

\begin{thm}
Let $T : L^1(G) \longrightarrow \ell^{\infty}-\oplus_{\pi \in \widehat{G}} B(H_{\pi})$ be a linear and bounded operator, which is
\begin{itemize}
	\item *-preserving, i.e., $T(f)^* = T(f^*)$ for every $f\in L^1(G)$, and
	\item pointwise irreducible, i.e., for every $\pi\in\widehat{G}$ the map $f \mapsto T(f)(\pi)$ is irreducible.
\end{itemize}

\noindent
If $T$ satisfies 
\begin{itemize}
	\item the convolution property, i.e., for every $f,g \in L^1(G)$ and $\sigma\in \widehat{G}$ we have
		\begin{gather}
		T(f\star g)(\sigma) = T(f)(\sigma) T(g)(\sigma)
		\label{convolutionpropertycompact}
		\end{gather}
		
	\item the time shift property, i.e., for every $f\in L^1(G), x \in G$ and $\pi \in \widehat{G}$ we have 
		\begin{gather}
		 T(L_xf)(\pi) = T(f)(\pi)\pi(x)^*
		\label{leftshiftcompact}
		\end{gather}
\end{itemize}

\noindent
then $T$ is the Fourier transform  
\label{maintheorem}
\end{thm}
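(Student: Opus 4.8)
The plan is to work one representation at a time and reduce the statement to a rigidity property captured by Schur's lemma. Fix $\pi \in \widehat{G}$ and consider $\rho_\pi := T(\cdot)(\pi) : L^1(G) \longrightarrow B(H_\pi)$. First I would check that $\rho_\pi$ is a nondegenerate irreducible *-representation of $L^1(G)$: linearity and the bound $\|\rho_\pi(f)\| \leqslant \|T\| \, \|f\|_1$ are inherited from $T$ and the definition of the $\ell^\infty$-direct-sum norm, multiplicativity is exactly the convolution property \eqref{convolutionpropertycompact} read off in the $\pi$-th coordinate, and the involutive identity $\rho_\pi(f^*) = \rho_\pi(f)^*$ is the *-preserving hypothesis. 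Irreducibility is the pointwise irreducibility assumption, and a nonzero irreducible *-representation is automatically nondegenerate, since its degenerate subspace $\{u : \rho_\pi(f)u = 0 \ \forall f\}$ is closed and invariant, hence trivial. The $\pi$-component $\Fourier(\cdot)(\pi)$ of the genuine Fourier transform enjoys the same structure, and by the correspondence recalled in the section it is the nondegenerate, irreducible integrated form of $\pi$.

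Next I would feed an approximate identity into the shift property. Choose any Dirac-type family $(\delta_U)_{U \in N_0}$ on $G$ (continuous, nonnegative, symmetric bumps shrinking to the neutral element and normalized to total mass $1$). Using $f \star \delta_U = \int_G f(x)\, L_x\delta_U \, dx$, pulling $T$ inside the integral exactly as in the proof of Theorem \ref{Fouriershiftcharacterization}, and invoking the time shift property \eqref{leftshiftcompact}, I obtain
$$T(f\star\delta_U)(\pi) = \int_G f(x)\, T(\delta_U)(\pi)\, \pi(x)^* \, dx = T(\delta_U)(\pi)\, \Fourier(f)(\pi).$$
Since $f\star\delta_U \longrightarrow f$ in $L^1(G)$ and $T$ is continuous, the left-hand side converges to $T(f)(\pi)$ in operator norm. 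Because $\Fourier(\cdot)(\pi)$ is nondegenerate, the vectors $\Fourier(f)(\pi)u$ span a dense subspace of $H_\pi$, and the uniform bound $\|T(\delta_U)(\pi)\| \leqslant \|T\|$ lets me upgrade pointwise convergence on that dense set to a strong limit $P_\pi := \lim_U T(\delta_U)(\pi) \in B(H_\pi)$ satisfying $T(f)(\pi) = P_\pi\, \Fourier(f)(\pi)$ for every $f$.

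The final step is to show $P_\pi = I$. Applying the convolution property to $\delta_U\star\delta_U$, which is again an approximate identity, gives $T(\delta_U)(\pi)^2 = T(\delta_U\star\delta_U)(\pi)$; passing to strong limits (products of uniformly bounded, strongly convergent nets converge strongly) yields $P_\pi^2 = P_\pi$. Since each $\delta_U$ is real and symmetric we have $\delta_U^* = \delta_U$, so each $T(\delta_U)(\pi)$ is self-adjoint and hence so is its weak limit $P_\pi$; thus $P_\pi$ is an orthogonal projection. Comparing the identity $T(f)(\pi) = P_\pi\, \Fourier(f)(\pi)$ with its adjoint and using that both $T$ and $\Fourier$ are *-preserving shows that $P_\pi$ commutes with every $\Fourier(g)(\pi)$. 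As $\Fourier(\cdot)(\pi)$ is irreducible, Schur's lemma forces $P_\pi$ to be a scalar, hence $0$ or $I$; the value $0$ would make $\rho_\pi$ the zero map, contradicting irreducibility, so $P_\pi = I$ and $T(f)(\pi) = \Fourier(f)(\pi)$. As $\pi \in \widehat{G}$ and $f \in L^1(G)$ are arbitrary, $T = \Fourier$.

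I expect the main obstacle to be the middle step: proving that $T(\delta_U)(\pi)$ genuinely converges rather than merely being bounded, and assembling $P_\pi$ cleanly, since the shift identity only pins it down on the dense range of $\Fourier(\cdot)(\pi)$. One must interleave the norm convergence $T(f\star\delta_U)(\pi) \longrightarrow T(f)(\pi)$, the uniform operator bound, and nondegeneracy in the correct order, and be careful that the relation defining $P_\pi$ is independent of the chosen approximate identity so that the idempotency argument applies. Once $P_\pi$ is recognized as a projection commuting with an irreducible family, Schur's lemma delivers the conclusion at once, replacing the lengthy invariant-subspace bookkeeping of the earlier approach.
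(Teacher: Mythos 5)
Your proof is correct, but it takes a genuinely different route from the paper's. After the common first step (observing that $T_{\pi} := T(\cdot)(\pi)$ is an irreducible *-representation of $L^1(G)$), the paper invokes the bijection between irreducible unitary representations of $G$ and irreducible nondegenerate *-representations of $L^1(G)$ to write $T_{\pi}$ in integrated form for some $\sigma : G \longrightarrow U(H_{\pi})$, and then uses the time shift property \eqref{leftshiftcompact} to force $\sigma = \pi$, by testing against the function $f(y) = \overline{\langle \sigma(y)^*(\sigma(x)^* - \pi(x)^*)u|v\rangle}$ and evaluating at the identity. You never use that correspondence theorem: instead you transplant the approximate-identity argument of Theorem \ref{Fouriershiftcharacterization} into the operator-valued setting to get $T(f)(\pi) = P_{\pi}\Fourier(f)(\pi)$ from the shift property and continuity alone, and then bring in the convolution property, *-preservation and irreducibility (via Schur's lemma, or simply by noting that the range of the projection $P_{\pi}$ is a common invariant subspace) solely to conclude $P_{\pi} = I$. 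The hypotheses thus trade places: in the paper the algebraic hypotheses do the heavy lifting and the shift property identifies the representation, whereas for you the shift property does the heavy lifting and the algebraic hypotheses play exactly the role that the Dirac delta property plays in Theorems \ref{characterizationofFouriertransform} and \ref{Fouriershiftcharacterization}. Your route buys three things. First, it unifies the compact and LCA cases under one scheme. Second, the analytic obstacle you worry about is essentially vacuous here: irreducible representations of a compact group are finite-dimensional, so the span of the vectors $\Fourier(f)(\pi)u$ is all of $H_{\pi}$ and the strong limit $P_{\pi}$ exists with no delicacy. Third, your argument is insensitive to an ordering subtlety that the paper glosses over: with the definition $\langle\Fourier(f)(\pi)u|v\rangle = \int_G f(x)\langle\pi(x)^*u|v\rangle\,dx$ one actually has $\Fourier(f\star g)(\pi) = \Fourier(g)(\pi)\Fourier(f)(\pi)$ on a nonabelian group, so the integrated form with $\sigma(x)^*$ that the paper extracts is not literally what the correspondence theorem yields for the homomorphism $T_{\pi}$; your proof applies \eqref{convolutionpropertycompact} only to the pair $f = g = \delta_U$, where the order of the factors is irrelevant. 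One caveat you share with the paper: ruling out $T(\cdot)(\pi) = 0$ requires the standard convention that irreducible representations are nonzero, since when $\dim H_{\pi} = 1$ the zero map also has no nontrivial invariant subspaces.
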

\begin{proof}
We choose $\pi \in \widehat{G}$ and consider a linear map $T_{\pi}: L^1(G)\longrightarrow B(H_{\pi})$ given by $T_{\pi}(f) := T(f)(\pi).$ Since $T$ is *-preserving, pointwise irreducible and it satisfies the convolution property then $T_{\pi}$ is an irreducible *-representation of $L^1(G).$ Consequently, there exists an irreducible unitary representation $\sigma : G\longrightarrow U(H_{\pi})$ such that 
$$\langle T_{\pi}(f)u|v\rangle = \int_G\ f(x) \langle \sigma(x)^*u|v\rangle\ dx$$

\noindent
for every $u,v \in H_{\pi}.$

Next, for every $f \in L^1(G), x\in G$ and $u,v\in H_{\pi}$ we have
\begin{equation*}
\begin{split}
\int_G\ f(y)\langle \sigma(y)^*\pi(x)^*u|v \rangle\ dy &= \langle T_{\pi}(f)\pi(x)^*u|v\rangle \stackrel{\eqref{leftshiftcompact}}{=} \langle T_{\pi}(L_xf)u|v\rangle\\ 
&= \int_G\ L_xf(y) \langle \sigma(y)^*u|v \rangle \ dy = \int_G\ f(x^{-1}y) \langle \sigma(y)^*u|v \rangle \ dy \\
&\stackrel{y\mapsto xy}{=} \int_G\ f(y) \langle \sigma(xy)^*u|v \rangle \ dy = \int_G\ f(y) \langle \sigma(y)^* \sigma(x)^*u|v \rangle \ dy.
\end{split}
\end{equation*}

\noindent
A simple rearrangement yields 
$$\int_G\ f(y) \langle \sigma(y)^* (\sigma(x)^* - \pi(x)^*)u|v \rangle = 0$$

\noindent
for every $f\in L^1(G), x \in G$ and $u,v\in H_{\pi}.$ Putting
$$f(y) := \overline{\langle \sigma(y)^* (\sigma(x)^* - \pi(x)^*)u|v \rangle}$$

\noindent
we arrive at the conclusion that 
$$\langle \sigma(y)^* (\sigma(x)^* - \pi(x)^*)u|v \rangle = 0$$

\noindent
for every $x,y \in G$ and $u,v\in H_{\pi}.$ Finally, we take $y$ to be the identity element in $G$ and $v = (\sigma(x)^* - \pi(x)^*)u$ to obtain $(\sigma(x)^* - \pi(x)^*)u = 0$ for every $x\in G$ and $u\in H_{\pi}.$ This means that $\sigma = \pi,$ which completes the proof. 
\end{proof}

\section{Beyond the Fourier transform}
\label{Section:Hankel}

This last section is meant to stimulate research regarding integral transforms and their characterizations. As the title suggests, our goal is to prove that the Fourier transform is not the only well-known operator characterized by a differential property. We hope that our next result will become a catalyst for future study of Fourier-like integral transform and their characterizations. 

To begin with, let $J_{\alpha}$ be the Bessel's function of the first kind solving the Bessel's differential equation (of order $\alpha$):
\begin{gather}
r^2 f''(r) + r f'(r) + (r^2 - \alpha^2)f(r) = 0
\label{Besselequation}
\end{gather}

\noindent
for every $r\in\reals_+.$ One form of expressing $J_{\alpha}$ is (see Chapter 9.4 in \cite{Temme}, p. 230 or Chapter VI in \cite{Watson}, p. 176):
$$\forall_{r\in\reals_+}\ J_{\alpha}(r) = \frac{1}{\pi} \int_0^{\pi}\ \cos(\alpha t - r\sin(t))\ dt - \frac{\sin(\alpha\pi)}{\pi} \int_0^{\infty}\ e^{-r\sinh(t)-\alpha t}\ dt,$$

\noindent
which implies that $J_{\alpha}$ is a bounded function. This feature distinguishes $J_{\alpha}$ from $Y_{\alpha},$ which is another solution of \eqref{Besselequation} called the Bessel's function of the second kind. $Y_{\alpha}$ is unbounded at the origin $r = 0.$ 

Next, we focus on the Hankel transform $\Hankel_{\alpha}: C_c(\reals_+)\longrightarrow C_0(\reals_+)$ of order $\alpha:$
\begin{gather}
\Hankel_{\alpha}(f)(y) := \int_{\reals_+}\ J_{\alpha}(yx)f(x)x\ dx.
\label{Hankeltransformdefinition}
\end{gather}

\noindent
Continuity of $\Hankel_{\alpha}(f)$ follows from the dominated convergence theorem, boundedness of $J_{\alpha}$ and the function $f$ having compact support. Furthermore, in order to see that $\Hankel_{\alpha}(f)$ vanishes at infinity, we make the substitution $x\mapsto \frac{x}{y}$ in \eqref{Hankeltransformdefinition} to obtain
$$\Hankel_{\alpha}(f)(y) = \int_{\reals_+}\ J_{\alpha}(x)f\left(\frac{x}{y}\right)\frac{x}{y^2}\ dx.$$ 

\noindent
Again, it suffices to apply the dominated convergence theorem to conclude that 
$$\lim_{y\rightarrow\infty}\ H_{\alpha}(f)(y) = 0$$

\noindent
for every $f\in C_c(\reals_+).$ We are now ready to prove the final result of the paper:

\begin{thm}
Let $T : C_c(\reals_+) \longrightarrow C_0(\reals_+)$ be an integral transform given by the formula
\begin{gather}
T(f)(y) := \int_{\reals_+}\ K(yx)f(x)x\ dx,
\label{Hankeltransform}
\end{gather}

\noindent
where $K\in C^2(\reals_+)$ is a bounded function. If
\begin{itemize}
	\item $T$ satisfies the Bessel's differential property (of order $\alpha>0$) 
	\begin{gather}
	\forall_{f\in C_c^2(\reals_+)}\ T\left(f'' + \frac{1}{x} f' - \frac{\alpha^2}{x^2}f\right)(y) = -y^2 T(f)(y),
	\label{Besseldifferentiationproperty}
	\end{gather}

	\noindent
	and
	\item there exists a function $f_*\in C_c^2(\reals_+)$ and $y_*\in\reals_+$ such that 
	\begin{gather}
	T(f_*)(y_*) = \Hankel_{\alpha}(f_*)(y_*) \neq 0,
	\label{Besselsecondassumption}
	\end{gather}
\end{itemize}

\noindent
then $T$ is the Hankel transform $\Hankel_{\alpha}$.
\label{Hankelcharacterization}
\end{thm}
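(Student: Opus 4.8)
The plan is to follow the same template as the proof of Theorem~\ref{characterizationofFouriertransform}: push the differential operator off the test function and onto the kernel by integration by parts, invoke the fundamental lemma of the calculus of variations to convert the resulting integral identity into a pointwise differential equation for $K$, solve that equation, and finally fix the remaining free constant using the normalization hypothesis \eqref{Besselsecondassumption}.

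First I would record the structural observation that makes the integration by parts clean, namely that the Bessel operator
$$L_\alpha f := f'' + \frac{1}{x}f' - \frac{\alpha^2}{x^2}f$$
is formally self-adjoint with respect to the weight $x\,dx$ on $\reals_+$, since $x\,L_\alpha f = (xf')' - \frac{\alpha^2}{x}f$ is in Sturm--Liouville form. Thus for $f\in C_c^2(\reals_+)$, whose support is a compact subset of the open half-line, integrating by parts (twice for the $f''$ term, once for the $f'$ term) transfers $L_\alpha$ onto the kernel with no boundary contributions, yielding
$$T(L_\alpha f)(y) = \int_{\reals_+}\ \left(y^2 x K''(yx) + y K'(yx) - \frac{\alpha^2}{x}K(yx)\right)f(x)\ dx.$$
Comparing this with $-y^2 T(f)(y) = -\int_{\reals_+} y^2 x K(yx) f(x)\,dx$ and using \eqref{Besseldifferentiationproperty}, I find that the continuous function $y^2 x K''(yx) + y K'(yx) - \frac{\alpha^2}{x}K(yx) + y^2 x K(yx)$ integrates to zero against every $f\in C_c^2(\reals_+)$, so the fundamental lemma forces it to vanish identically for all $x,y\in\reals_+$.

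Next I would substitute $r := yx$ and clear denominators; the dependence on $y$ cancels and the identity collapses to $r^2 K''(r) + r K'(r) + (r^2 - \alpha^2)K(r) = 0$, which is exactly Bessel's equation \eqref{Besselequation}. Hence $K = c_1 J_\alpha + c_2 Y_\alpha$ for some constants $c_1, c_2$. Because $Y_\alpha$ is unbounded at the origin while $K$ is assumed bounded, we must have $c_2 = 0$, so $K = c_1 J_\alpha$ and therefore $T = c_1 \Hankel_\alpha$ on all of $C_c(\reals_+)$ — not merely on $C_c^2(\reals_+)$, since knowing the kernel determines the operator outright, so no density argument is needed. Evaluating at $f_*$ and $y_*$ and invoking \eqref{Besselsecondassumption} gives $c_1 \Hankel_\alpha(f_*)(y_*) = \Hankel_\alpha(f_*)(y_*) \neq 0$, whence $c_1 = 1$ and $T = \Hankel_\alpha$.

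The step I expect to be the main obstacle is the bookkeeping in the integration-by-parts calculation: checking that the boundary terms genuinely vanish (which hinges on $f$ being compactly supported away from both $0$ and $\infty$) and differentiating the composite kernel $x\mapsto K(yx)$ carefully enough that the chain-rule factors of $y$ line up to reproduce Bessel's equation precisely after the substitution $r = yx$. Everything else is structurally identical to the real-line case of Theorem~\ref{characterizationofFouriertransform}: the fundamental lemma plays the same role it played there, and the boundedness hypothesis plays the role the Dirac delta ``boundary condition'' played in selecting a single solution out of the two-dimensional solution space of the ODE.
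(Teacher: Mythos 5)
Your proposal is correct and follows essentially the same route as the paper's own proof: integrate by parts twice to transfer the Bessel operator onto the kernel (boundary terms vanishing because $\supp f$ is compact in the open half-line), apply the fundamental lemma of the calculus of variations to obtain Bessel's equation for $K$, discard $Y_\alpha$ by the boundedness assumption, and fix $C_1=1$ via \eqref{Besselsecondassumption}. In fact your version states the resulting ODE with the correct sign, $r^2K''(r)+rK'(r)+(r^2-\alpha^2)K(r)=0$, whereas the paper's displayed equation contains a sign typo (it writes $-(r^2-\alpha^2)K(r)$) even though its conclusion $K=C_1J_\alpha+C_2Y_\alpha$ presupposes the correct Bessel equation.
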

\begin{proof}
In order to facilitate the computations throughout the proof, let us begin with a pair of auxiliary integrations by parts. For every $f\in C_c^2(\reals_+)$ and $y\in\reals_+$ we have
\begin{align}
\int_{\reals_+}\ xK(yx)f''(x)\ dx &= - \int_{\reals_+}\ K(yx)f'(x)\ dx - \int_{\reals_+}\ yxK'(yx) f'(x)\ dx, \label{align:1}\\
\int_{\reals_+}\ yx K'(yx) f'(x)\ dx &= - \int_{\reals_+}\ yK'(yx) f(x)\ dx  - \int_{\reals_+}\ y^2xK''(yx) f(x)\ dx. \label{align:2}
\end{align}

Due to Bessel's differential property, for every $f\in C^2_c(\reals_+)$ and $y\in\reals_+$ we have
\begin{equation*}
\begin{split}
-y^2 T(f)(y) &\stackrel{\eqref{Besseldifferentiationproperty}}{=} T\left(f'' + \frac{1}{x} f' - \frac{\alpha^2}{x^2}f\right)(y) = \int_{\reals_+}\ K(yx)\left(f''(x) + \frac{1}{x}f'(x) - \frac{\alpha^2}{x^2}f(x)\right)x\ dx \\
&= \int_{\reals_+}\ xK(yx)f''(x)\ dx + \int_{\reals_+}\ K(yx)f'(x)\ dx - \int_{\reals_+}\ K(yx)\frac{\alpha^2}{x}f(x)\ dx \\
&\stackrel{\eqref{align:1}}{=} - \int_{\reals_+}\ yxK'(yx) f'(x)\ dx - \int_{\reals_+}\ K(yx)\frac{\alpha^2}{x}f(x)\ dx \\
&\stackrel{\eqref{align:2}}{=} \int_{\reals_+}\ \left(y^2xK''(yx) + yK'(yx) - \frac{\alpha^2}{x}K(yx)\right) f(x)\ dx,
\end{split}
\label{mainHankelcomputation}
\end{equation*}

\noindent
which can be rewritten as 
$$0 = \int_{\reals_+}\ \bigg((yx)^2K''(yx) + yxK'(yx) - \big((yx)^2 - \alpha^2\big) K(yx)\bigg) \frac{f(x)}{x}\ dx.$$

\noindent
By the fundamental theorem of calculus of variations we have 
$$r^2K''(r) + rK'(r) - (r^2 - \alpha^2) K(r) = 0$$

\noindent
for every $r\in\reals_+.$ This implies that $K$ is a linear combination of Bessel's functions $J_{\alpha}$ and $Y_{\alpha}$ of first and second kind, respectively, so
$$K(r) = C_1 J_{\alpha}(r) + C_2 Y_{\alpha}(r)$$

\noindent
for some constants $C_1, C_2 \in\reals.$ Since $Y_{\alpha}$ is unbounded near the origin (and $J_{\alpha}$ is bounded), it must be the case that $C_2 = 0$, as the kernel $K$ is (by assumption) bounded. This means that $T = C_1\Hankel_{\alpha}.$

Finally, we have
$$\Hankel_{\alpha}(f_*)(y_*) \stackrel{\eqref{Besselsecondassumption}}{=} T(f_*)(y_*) = C_1\Hankel_{\alpha}(f_*)(y_*),$$

\noindent
which leads to the conclusion that $C_1 = 1$ and completes the proof. 
\end{proof}

\end{document}